\documentclass[final,12pt,3p]{elsarticle}

\usepackage{graphicx}
\usepackage{amssymb}
\usepackage{amsthm}
\usepackage{amsmath}
\usepackage{algorithm}
\usepackage{algorithmic}
\usepackage{longtable} 
\usepackage{tikz}
\usepackage{lineno}
\usepackage{verbatim} 
\usepackage{hyperref}
\usepackage{xcolor}
\usepackage{caption}
\usetikzlibrary{shapes}
\usepackage{subcaption}
\usepackage{mathrsfs} 
\usepackage{array}
\usepackage{multirow}
\usepackage{booktabs} 
\usepackage{adjustbox}
\usepackage[title]{appendix}

\biboptions{sort&compress}
\hypersetup{
colorlinks=true,
linkcolor=red,
filecolor=green,
urlcolor=green,
anchorcolor=green,
citecolor=cyan,
pdftitle={Overleaf Example},
pdfpagemode=FullScreen,
}
\theoremstyle{plain}
\newtheorem{theorem}{Theorem}
\newtheorem{proposition}[theorem]{Proposition}

\theoremstyle{definition}

\theoremstyle{remark}
\newtheorem{remark}{Remark}

\begin{document}

\begin{frontmatter}

\title{Structure-Preserving Neural ODEs via Nonstandard Finite Difference Discretization}

\author[BUW,MAIS]{Achraf Zinihi}
\ead{a.zinihi@edu.umi.ac.ma} 

\author[BUW]{Matthias Ehrhardt\corref{Corr}}
\cortext[Corr]{Corresponding author}
\ead{ehrhardt@uni-wuppertal.de}

\author[MAIS]{Moulay Rchid Sidi Ammi}
\ead{rachidsidiammi@yahoo.fr}

\address[BUW]{University of Wuppertal, 
Applied and Computational Mathematics,\\
Gaußstrasse 20, 42119 Wuppertal, Germany}

\address[MAIS]{Moulay Ismail University of Meknes, FST Errachidia,\\ Department of Mathematics, AMNEA Group,
Errachidia 52000, Morocco}


\begin{abstract}
Although neural ordinary differential equations (NODEs) are a powerful framework for learning continuous-time dynamics, they generally do not preserve essential qualitative properties, such as positivity. 
We propose a structure-preserving Neural ODE framework based on nonstandard finite difference (NSFD) discretization. 
The learned dynamics are parameterized by nonnegative production and destruction rates, yielding an explicit, differentiable update that integrates seamlessly into standard automatic differentiation pipelines. We prove that the resulting scheme unconditionally preserves positivity for arbitrary time-step sizes while retaining first-order consistency. 
We outline an extension based on Patankar-type discretizations that preserves conservation laws exactly. 
Numerical experiments on an SIR epidemic model show that our approach generates physically meaningful trajectories, remains robust under coarse discretizations, and outperforms conventional NODEs in preserving the qualitative structure of the learned dynamics.

\end{abstract}

\begin{keyword}
Neural ODEs \sep NSFD schemes \sep Positivity preservation \sep Structure-preserving discretization \sep Scientific machine learning

\textit{2020 Mathematics Subject Classification:} 65L05, 65L20, 68T07, 92D30
\end{keyword}

\journal{Applied Mathematics Letters}

\end{frontmatter}


\section{Introduction}\label{S1}
Neural ODEs \citep{chen2018neural} parameterize the right-hand side of an initial value problem $\dot{x} = f_\theta(x,t)$ by a neural network $f_\theta$ and treat the network as an implicit, continuous-depth model that is fit to data by differentiating through a numerical ODE solver. 
This formulation is advantageous for learning the latent dynamics of physical and biological systems, including epidemic models, because it preserves the continuous-time structure of the underlying phenomenon rather than treating time as a discrete, recurrent index. 
However, in these applications, the state variables usually have a physical meaning, such as compartment sizes, concentrations, or population counts, and they must remain nonnegative. Often, they also satisfy additional invariants, such as a constant total population. 
A generic $f_\theta$ provides no such guarantee: nothing in the training objective prevents the learned flow from producing negative states, particularly under extrapolation or coarse time stepping, and the standard remedies (clipping, penalty terms for constraint violations, or projection back onto the feasible region after each solver step) are post-hoc corrections rather than structural guarantees. Furthermore, these remedies can also introduce non-smoothness or bias into the learned dynamics.

A separate line of work in numerical analysis, dating back to \cite{mickens1994nonstandard} and rigorously developed  by \cite{anguelov2001contributions} and many others (see \cite{Zinihi2026NSFDpLap} and the references therein for recent high-order constructions), shows that \textit{nonstandard finite difference} (NSFD) schemes can be designed so that the discrete scheme preserves the positivity, boundedness, and other qualitative features of a continuous dynamical system for \textit{every} step size, not just for a step size below some stability threshold. 
The central device is a nonlocal treatment of the right-hand side, along with a generalized, model-dependent denominator function that replaces $\Delta t$. 
This approach has been applied extensively to compartmental epidemic models with prescribed (not learned) right-hand sides \cite{Zinihi2025NSFD}. 
In the NODE literature, "structure" usually refers to a soft, statistical constraint, such as an information bottleneck for dimensionality reduction 
or a linear-in-features form motivated by Koopman operator theory, 
rather than a hard, provable guarantee on the state itself. 
By contrast, this work 
provides a structural constraint of the latter, provable kind.

This work contributes by combining two approaches: we constrain a NODE's vector field to a gain/loss (production/destruction) form and pair it with an NSFD time-stepping rule. 
This guarantees the positivity of the learned dynamics unconditionally, rather than approximately through penalization.
Specifically, we: 
(i) parameterize the network so that it outputs nonnegative gain and loss rates rather than an unconstrained vector field (Section~\ref{S2});
(ii) construct an NSFD update for this class of networks and prove unconditional positivity for any step size (Section~\ref{S3});
(iii) show that the update is a closed-form, differentiable function of the network output; therefore, it requires no additional computation beyond that used in a standard NODE training loop (Proposition~\ref{P2});
(iv) outline how a total-population invariant can be preserved exactly by transitioning from an independent gain/loss parameterization to a flux-based, Patankar-type scheme  \cite{Burchard2003} (Section~\ref{S4}); and
(v) Demonstrate the method on a latent SIR learning problem by comparing it to an unconstrained NODE baseline (Section~\ref{S5}).

\section{Gain-Loss Neural ODEs}\label{S2}
Let $x(t) \in \mathbb{R}_+^d$ denote the latent state, and suppose the true generative dynamics admit a decomposition into nonnegative gain and loss terms,
\begin{equation}\label{E1}
    \dot{x}_i(t) = G_i(x(t),t) - L_i(x(t),t)\,x_i(t), \quad i = 1,\dots,d,
\end{equation}
with $G_i \ge 0$ and $L_i \ge 0$ are sufficiently smooth functions on $\mathbb{R}_+^d$. 
This is not a restriction on the class of dynamics: any locally Lipschitz vector field $f$ satisfying the subtangential condition $f_i(x) \ge0$ whenever $x_i = 0$ (the standard necessary and sufficient condition for $\mathbb{R}_+^d$ to be positively invariant) can be written in this form, e.g. by taking $L_i \equiv 0$ and $G_i = f_i$ wherever $f_i \ge 0$, and folding negative contributions into $L_i x_i$ elsewhere; the decomposition simply makes the sign structure explicit rather than leaving it implicit in a single network. 
This decomposition is used to define the positive and negative parts of $f_\theta$. For further details, we refer the reader to \cite{Zinihi2026NSFDpLap}.
Compartmental epidemic models are a natural fit: 
in the SIR model of Section~\ref{S5}, new infections are a gain for $I$ and a loss for $S$, and recovery is a gain for $R$ and a loss for $I$.

We parameterize $G_i$ and $L_i$ by feed-forward networks with a final softplus (or squared) output activation
\begin{equation*}
    G_{\theta,i}(x,t) = \operatorname{softplus}\bigl(g_{\theta,i}(x,t)\bigr) \quad\text{ and }\quad 
    L_{\theta,i}(x,t) = \operatorname{softplus}\bigl(\ell_{\theta,i}(x,t)\bigr),
\end{equation*}
where $g_{\theta,i}, \ell_{\theta,i}\colon\mathbb{R}^d \times [0,T]\to\mathbb{R}$ are unconstrained networks. 
This guarantees $G_{\theta,i} \ge 0$ and $L_{\theta,i} \ge 0$ for every $\theta$, which is the only hypothesis that Theorem~\ref{T1} below requires. No constraint is imposed on $\theta$ during training.

\section{An NSFD Update for Gain-Loss Neural ODEs}\label{S3}
Given a time grid $t_n = n\Delta t$ and state $x^n \approx x(t_n)$, we propose the following update
\begin{equation}\label{E2}
        \frac{x_i^{n+1} - x_i^n}{\phi(\Delta t)} = G_{\theta,i}(x^n,t_n) - L_{\theta,i}(x^n,t_n)\, x_i^{n+1}, \quad i = 1,\dots,d,
\end{equation}
where the gain term is evaluated explicitly at $x^n$, and the loss term is evaluated semi-implicitly, by multiplying the \textit{new} state $x_i^{n+1}$. 
The denominator function $\phi$ replaces $\Delta t$ and must satisfy the standard NSFD consistency conditions:
$\phi(\Delta t) > 0$, 
$\phi(\Delta t) = \Delta t + O(\Delta t^2)$ as $\Delta t \to0^+$, 
so that \eqref{E2} reduces to the semi-implicit Euler discretization of \eqref{E1} in the small-step limit, and it is a first-order consistent scheme. 
The simplest admissible choice is $\phi(\Delta t) = \Delta t$ itself; more elaborate choices, e.g.\ $\phi(\Delta t) = (1-e^{-\lambda \Delta t})/\lambda$, can be used to match a known local rate $\lambda$ exactly, as discussed in \cite{mickens1994nonstandard, Zinihi2025NSFD, Zinihi2026NSFDpLap}.

Since the loss term is linear in $x_i^{n+1}$, \eqref{E2} can be solved for $x_i^{n+1}$ in closed form:
\begin{equation}\label{E3}
   x_i^{n+1} = \frac{x_i^n + \phi(\Delta t)\, G_{\theta,i}(x^n,t_n)}
   {1 + \phi(\Delta t)\, L_{\theta,i}(x^n,t_n)}, \quad i = 1,\dots,d.
\end{equation}
Unlike a generic implicit scheme, \eqref{E3} requires no nonlinear solving at each step: it is an explicit, elementwise rational function of the network outputs at $x^n$.
Consequently, \eqref{E3} defines a differentiable computational layer that can directly replace a conventional numerical integration layer in NODE training.
Since the update consists only of differentiable operations with a strictly positive denominator, it is fully compatible with standard automatic differentiation frameworks.

\begin{proposition}[Consistency]\label{P1}
Assume that the exact solution $x(t)$ of \eqref{E1} is sufficiently smooth on $[0,T]$, and that $G_i$ and $L_i$ are continuously differentiable.
Then the NSFD scheme \eqref{E3} is first-order consistent in time.
\end{proposition}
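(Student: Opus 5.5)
We define the local truncation error by inserting the exact solution into the difference form \eqref{E2}, which is equivalent to \eqref{E3} because $1+\phi(\Delta t)L_i>0$. Concretely, for $i=1,\dots,d$ we set
\begin{equation*}
\tau_i^n := \frac{x_i(t_{n+1}) - x_i(t_n)}{\phi(\Delta t)} - G_i(x(t_n),t_n) + L_i(x(t_n),t_n)\,x_i(t_{n+1}).
\end{equation*}
The goal is to show that $|\tau_i^n| \le C\,\Delta t$ uniformly in $n$ with $t_{n+1}\le T$, where $C$ depends only on bounds for $x$, $\dot x$, $\ddot x$ and $L_i$ along the trajectory. Since the exact solution is smooth on the compact interval $[0,T]$, it lies in a compact set $K$. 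On $K\times[0,T]$ the continuous functions $G_i$ and $L_i$ are bounded, so all the constants below are finite.

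First we expand the difference quotient. By Taylor's theorem, $x_i(t_{n+1}) - x_i(t_n) = \Delta t\,\dot x_i(t_n) + R_i^n$ with $|R_i^n|\le \tfrac12 \Delta t^2 \max_{[0,T]}|\ddot x_i|$. The consistency hypothesis on the denominator gives $\phi(\Delta t) = \Delta t + O(\Delta t^2)$, so $\Delta t/\phi(\Delta t) = 1 + O(\Delta t)$ and $1/\phi(\Delta t) = O(1/\Delta t)$ for small $\Delta t$. Together these yield
\begin{equation*}
\frac{x_i(t_{n+1}) - x_i(t_n)}{\phi(\Delta t)} = \dot x_i(t_n) + O(\Delta t).
\end{equation*}
Next we handle the semi-implicit loss term by writing $x_i(t_{n+1}) = x_i(t_n) + O(\Delta t)$, using the bound on $\dot x_i$. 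Since $L_i$ is bounded on $K\times[0,T]$, this gives $L_i(x(t_n),t_n)\,x_i(t_{n+1}) = L_i(x(t_n),t_n)\,x_i(t_n) + O(\Delta t)$.

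Substituting both expansions and using the ODE \eqref{E1} at $t=t_n$, namely $\dot x_i(t_n) = G_i - L_i x_i$, the $O(1)$ terms cancel and leave $\tau_i^n = O(\Delta t)$. Consistency only needs continuity of $G_i$ and $L_i$; the $C^1$ hypothesis guarantees that $x$ is $C^2$ and would be needed for the Lipschitz stability step if one also wanted convergence.

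The main point requiring care is the uniformity of the $O(\Delta t)$ constants in $n$. This is where compactness of $[0,T]$ and the boundedness of $L_i$ and $\ddot x_i$ are used. A second point is making precise that the $O(\Delta t^2)$ remainder in $\phi$ enters only through the factor $\dot x_i\,(\Delta t/\phi - 1)$, which is $O(\Delta t)$.

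If one prefers to state consistency in the update form \eqref{E3}, one can instead multiply by $\phi(\Delta t)$. The one-step defect is then $\phi(\Delta t)\,\tau_i^n = O(\Delta t^2)$, and dividing by the positive denominator $1+\phi(\Delta t)L_i \ge 1$ preserves this order.
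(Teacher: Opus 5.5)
Your proof is correct and takes essentially the same route as the paper: Taylor-expand the exact solution, replace $L_i x_i(t_{n+1})$ by $L_i x_i(t_n)+O(\Delta t)$, use $\phi(\Delta t)=\Delta t+O(\Delta t^2)$, and cancel the leading terms via \eqref{E1}. The only differences are that you first bound the divided-form residual by $O(\Delta t)$ and then convert it to the one-step defect $O(\Delta t^2)$ that the paper states directly, and that you make the uniformity of the constants in $n$ explicit, which the paper leaves implicit.
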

\begin{proof}
Let $x(t)$ be the exact solution of \eqref{E1}. 
By Taylor expansion, we have that
\begin{equation*}
   x_i(t_{n+1})=x_i(t_n)+\Delta t\,\dot{x}_i(t_n)+O(\Delta t^2).
\end{equation*}
Using \eqref{E1}, this becomes
\begin{equation*}
   x_i(t_{n+1})=x_i(t_n) +\Delta t\Big(G_i(x(t_n),t_n)-L_i(x(t_n),t_n)x_i(t_n)\Big)
  +O(\Delta t^2).
\end{equation*}
Now evaluate the NSFD update \eqref{E2} along the exact solution. 
Since
    $x_i(t_{n+1})=x_i(t_n)+O(\Delta t)$,
and $G_i$, $L_i$ are smooth, we have
\begin{equation*}
  L_i\bigl(x(t_n),t_n\bigr)\,x_i(t_{n+1})=
   L_i\bigl(x(t_n),t_n\bigr)\,x_i(t_n) +O(\Delta t).
\end{equation*}
Using $\phi(\Delta t)=\Delta t+O(\Delta t^2)$, 
the discrete update produces the same expansion up to $O(\Delta t^2)$. 
Therefore, the local truncation error satisfies
    $\tau_i^n=O(\Delta t^2)$,
which proves first-order consistency in time.
\end{proof}

Thus, the NSFD update \eqref{E2} has the same formal temporal accuracy as the semi-implicit Euler method and additionally preserves positivity unconditionally.

\begin{theorem}[Unconditional Positivity and Boundedness]\label{T1}
Let $x^0 \in \mathbb{R}_+^d$. 
Then, the sequence generated by \eqref{E3} satisfies $x^n\in\mathbb{R}_+^d$ for all $n \ge 0$.
Furthermore, if the scheme preserves the invariant set $\Omega \subset \mathbb{R}_+^d$, then $x^n \in \Omega$ for all $n \ge 0$, and thus the numerical solution is bounded.
\end{theorem}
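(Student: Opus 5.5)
The argument is an induction on $n$ driven by the closed-form update \eqref{E3}. The base case $x^0\in\mathbb{R}_+^d$ is the hypothesis. For the inductive step, assume $x^n\in\mathbb{R}_+^d$, so that $x_i^n\ge 0$ for every $i$, and look at each component of \eqref{E3} separately.

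The numerator $x_i^n+\phi(\Delta t)\,G_{\theta,i}(x^n,t_n)$ is a sum of nonnegative terms:
\begin{itemize}
\item $x_i^n\ge0$ by the induction hypothesis;
\item $\phi(\Delta t)>0$ by the NSFD admissibility condition on the denominator function;
\item $G_{\theta,i}\ge0$ because of the softplus output activation of Section~\ref{S2}.
\end{itemize}
For the same reasons, $1+\phi(\Delta t)\,L_{\theta,i}(x^n,t_n)\ge 1>0$, using $L_{\theta,i}\ge0$. The quotient is therefore well defined and nonnegative, so $x_i^{n+1}\ge0$ for all $i$, and hence $x^{n+1}\in\mathbb{R}_+^d$.

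Nowhere does this use a restriction on $\Delta t$ or on $\theta$; it uses only $\phi>0$ and the sign structure of $G_\theta$ and $L_\theta$. That is what makes the positivity unconditional. The same argument gives strict positivity when $x^0$ lies in the open orthant, because then the numerator is strictly positive.

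For the second statement, the hypothesis that "the scheme preserves $\Omega$" should be read as forward invariance of $\Omega$ under the one-step map $\Psi$ defined by \eqref{E3}: that is, $\Psi(\Omega)\subset\Omega$, and the statement implicitly requires $x^0\in\Omega$. Under this reading, a second induction gives $x^n=\Psi^n(x^0)\in\Omega$ for all $n$. If $\Omega$ is bounded, as for the simplex $\{x\in\mathbb{R}_+^d:\sum_i x_i\le N\}$ in the SIR setting, boundedness of $x^n$ follows immediately.

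The only real obstacle is interpretive rather than technical. For independently parameterized $G_\theta$ and $L_\theta$, the update \eqref{E3} need not preserve a sum invariant, so the boundedness part cannot be proved from \eqref{E3} alone. It is genuinely conditional on the stated invariance hypothesis, or on the flux-based Patankar-type construction of Section~\ref{S4}. I would state this hypothesis precisely instead of trying to derive it.
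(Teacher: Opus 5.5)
Your proof is correct and follows the paper's argument. The paper also inducts on $n$, noting that the numerator of \eqref{E3} is nonnegative and the denominator is at least $1$, and then gets the second part directly from the invariance hypothesis; your explicit requirements that $x^0\in\Omega$ and that $\Omega$ be bounded make precise what the paper's proof uses tacitly.
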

\begin{proof}
By induction on $n$. 
The base case $x^0 \in \mathbb{R}_+^d$ holds by assumption. 
Suppose $x^n \in \mathbb{R}_+^d$. 
Then for each $i$, the numerator of \eqref{E3} is a sum of two nonnegative terms, $x_i^n \ge 0$ and $\phi(\Delta t) \,G_{\theta,i}(x^n,t_n) \ge 0$, and the denominator is $1 + \phi(\Delta t)\, L_{\theta,i}(x^n,t_n) \ge 1 > 0$.
Hence $x_i^{n+1} \ge 0$ for every $i$, so $x^{n+1} \in \mathbb{R}_+^d$.
If the scheme also preserves a bounded invariant region $\Omega$, then $x^n\in\Omega$ for all $n\ge 0$. Since $\Omega$ is bounded, the numerical solution is bounded.
\end{proof}

Note that no bound on the step size $\Delta t$ is required. 
The result holds for arbitrarily large step sizes, which is the defining feature of an NSFD scheme as opposed to a standard scheme whose positivity is only conditional on a step-size restriction.



\begin{proposition}[Differentiability]\label{P2}
If $g_{\theta,i}, \ell_{\theta,i}$ are differentiable with respect to $\theta$ for
every $x,t$, then $x_i^{n+1}$ is also differentiable with respect to $\theta$ for every $n$.
\end{proposition}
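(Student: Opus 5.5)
The plan is an induction on $n$, applying the chain rule to the closed-form update \eqref{E3}. The dependence on $\theta$ enters in two ways: directly through the network outputs $g_{\theta,i}$ and $\ell_{\theta,i}$, and indirectly through the previous state $x^n = x^n(\theta)$. The induction must therefore carry the stronger statement that the whole vector $x^n$ is differentiable in $\theta$, not just each component separately.

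\emph{Base case.} The initial state $x^0$ does not depend on $\theta$, so it is trivially differentiable. If $x^0$ is itself produced by a differentiable encoder, the same argument goes through unchanged.

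\emph{Inductive step.} Assume $\theta \mapsto x^n(\theta)$ is differentiable. I will argue in three stages.
\begin{enumerate}
\item The map $\theta \mapsto g_{\theta,i}(x^n(\theta),t_n)$ is differentiable. This is the composition of $(\theta,x)\mapsto g_{\theta,i}(x,t_n)$ with $\theta\mapsto(\theta,x^n(\theta))$.
\item Since $\operatorname{softplus}(z)=\log(1+e^z)$ is $C^\infty$ (and so is the squared activation), the maps $\theta\mapsto G_{\theta,i}(x^n,t_n)$ and $\theta\mapsto L_{\theta,i}(x^n,t_n)$ are differentiable.
\item In \eqref{E3}, the numerator $x_i^n+\phi(\Delta t)G_{\theta,i}$ and the denominator $1+\phi(\Delta t)L_{\theta,i}$ are differentiable, and $\phi(\Delta t)$ is a constant independent of $\theta$. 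The denominator satisfies $1+\phi(\Delta t)L_{\theta,i}\ge 1>0$ because $L_{\theta,i}\ge0$; this is the same observation used in the proof of Theorem~\ref{T1}. The quotient rule therefore gives differentiability of $x_i^{n+1}$, with an explicit derivative
\begin{equation*}
\partial_\theta x_i^{n+1}=\frac{\partial_\theta x_i^n+\phi\,\partial_\theta G_{\theta,i}}{1+\phi L_{\theta,i}}-\frac{(x_i^n+\phi G_{\theta,i})\,\phi\,\partial_\theta L_{\theta,i}}{(1+\phi L_{\theta,i})^2}.
\end{equation*}
\end{enumerate}
Here $\phi=\phi(\Delta t)$, and $\partial_\theta G_{\theta,i}$ includes the term coming from the $x^n$-dependence via the chain rule. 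Doing this for every $i$ closes the induction.

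\emph{Main obstacle.} The main obstacle is the hypothesis itself. The proposition only states differentiability of $g_{\theta,i},\ell_{\theta,i}$ in $\theta$ for fixed $(x,t)$, but stage 1 needs joint differentiability in $(\theta,x)$, because $x^n$ depends on $\theta$. Partial differentiability in $\theta$ alone would not suffice for the chain rule. I would handle this by interpreting the hypothesis as joint (Fréchet) differentiability in $(\theta,x)$. This holds automatically for feed-forward networks with smooth activations, since they are compositions of affine maps and smooth functions. If the networks use ReLU, the conclusion would instead hold almost everywhere, or in the Clarke sense. Everything else is routine: the only point at which differentiability could fail is a vanishing denominator, and positivity of $L_{\theta,i}$ rules that out.
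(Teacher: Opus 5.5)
Your proposal is correct and follows the same route as the paper. The paper's proof is the one-line observation that \eqref{E3} composes softplus, the networks $g_{\theta,i},\ell_{\theta,i}$, and elementary arithmetic with a denominator $1+\phi(\Delta t)L_{\theta,i}\ge 1$, applied recursively in $n$; your explicit induction and quotient-rule derivative make that recursion precise.

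Your caveat is also well taken. The paper's argument tacitly assumes $g_{\theta,i},\ell_{\theta,i}$ are differentiable jointly in $(\theta,x)$, which holds automatically for the smooth $\tanh$/softplus MLPs it uses. Differentiability in $\theta$ for each fixed $x$ alone does not survive composition with $\theta\mapsto x^n(\theta)$ once $n\ge 1$, so the statement as literally written needs your strengthened hypothesis.
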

\begin{proof}
Immediate: \eqref{E3} composes the differentiable softplus map, elementary arithmetic operations with a strictly positive denominator, and $g_{\theta,i},\ell_{\theta,i}$, all of these are differentiable, and the composition is applied recursively in $n$.
\end{proof}

\begin{remark}
Proposition~\ref{P2} allows \eqref{E3} to be used as a drop-in replacement for a standard NODE solver step in existing training pipelines. Training proceeds exactly as it would for a discretized NODE,
with no need for an adjoint sensitivity equation or an implicit-function-theorem solve, since \eqref{E3} is already explicit in $x^{n+1}$.
\end{remark}

    
\section{Towards Exact Conservation}\label{S4}
Many epidemic models additionally conserve a total quantity, e.g.\ a closed population $N = S+I+R$. 
However, the gain/loss parameterization of Section~\ref{S2} does not enforce this by itself, because $G_{\theta,i}$ and $L_{\theta,i}$ are independent across $i$. 
Exact conservation requires that each unit of loss from one compartment appear as a matching unit of gain in another. This structure is studied under the name of \textit{production-destruction systems}, for which positivity and conservation-preserving schemes 
are known \cite{Burchard2003}. 
We outline the corresponding extension here as a natural next step rather than a result we prove in full. We replace the independent networks $G_{\theta,i}$, $L_{\theta,i}$ by a single set of pairwise, nonnegative flux networks $\Phi_{\theta,ij}(x,t) \ge 0$ ($i \neq j$) representing the learned rate of transfer from compartment $i$ to compartment $j$:
\begin{equation*}
    G_{\theta,i} = \sum_{j \neq i} \Phi_{\theta,ji} \quad  \text{ and } \quad
    L_{\theta,i}\, x_i = \sum_{j \neq i} \Phi_{\theta,ij}.
\end{equation*}
At the continuous level this decomposition telescopes to exactly 
\begin{equation*}
    \sum_i \dot{x}_i = \sum_i \sum_{j \neq i} \Phi_{\theta,ji} - \sum_i \sum_{j \neq i} \Phi_{\theta,ij} = 0,
\end{equation*} 
so the total population is conserved. 
Simultaneously achieving the \textit{discrete} analogue with unconditional positivity is precisely the problem  modified Patankar Runge-Kutta (MPRK) schemes solve for prescribed production-destruction systems, by treating the full off-diagonal flux structure implicitly and solving the resulting (sparse, nonnegative) linear system at each step rather than the diagonal closed form \eqref{E3}. 
A full positivity-and-conservation proof for the learned flux network $\Phi_\theta$, together with the resulting linear-solve training step, is left for future work. We mention this here because it directly targets the stronger "$N$ exactly constant" property that motivated this project, beyond the positivity guarantee already established in Theorem~\ref{T1}.

\section{Example and Numerical Results}\label{S5}
We illustrate the proposed method on the classical SIR model
\begin{equation*}
   \dot{S} = -\beta S I, \quad 
   \dot{I} = \beta SI - \gamma I, \quad 
   \dot{R} = \gamma I,
\end{equation*}
which is already in gain/loss form \eqref{E1} with 
\begin{equation*}
     G_S = 0, \quad L_S = \beta I, \quad G_I = \beta S I, \quad 
     L_I = \gamma, \quad G_R = \gamma I, \quad L_R = 0. 
\end{equation*}
We treat the parameters $\beta$, $\gamma$ as unknown and replace them with the gain/loss networks of Section~\ref{S2}. 
The resulting NSFD update, from \eqref{E3}, is
\begin{equation*}
   S^{n+1} = \frac{S^n}{1 + \phi(\Delta t)\, L_{\theta,S}(x^n,t_n)}, \quad
   I^{n+1} = \frac{I^n + \phi(\Delta t)\, G_{\theta,I}(x^n,t_n)}
    {1 + \phi(\Delta t)\, L_{\theta,I}(x^n,t_n)},
\end{equation*}
and
\begin{equation*}
R^{n+1} = R^n + \phi(\Delta t)\, G_{\theta,R}(x^n,t_n). 
\end{equation*}

\begin{remark}[A free conservation law for the closed SIR model]\label{R1}
Because $\dot{S} + \dot{I} + \dot{R} \equiv 0$, the total population $N=S+I+R$ is conserved by the true dynamics for any $\beta$, $\gamma$, and $R(t) = N_0 - S(t) - I(t)$. 
This allows us to avoid the general flux construction of Section~\ref{S4} here. 
We only model the $(S,I)$ system with the gain/loss networks $L_{\theta,S}$, $G_{\theta,I}$, $L_{\theta,I}$, as well as the update equation~\eqref{E3}, and we recover $R^n := N_0-S^n-I^n$ algebraically. 
Section~\ref{S4}'s construction remains necessary when no such algebraic reduction exists (e.g. open populations, or conservation laws not equal to a simple state sum).
One caveat: nothing here guarantees $S+I\le N_0$, so the recovered $R^n$ is not itself covered by Theorem~\ref{T1}, which applies modeled states $(S,I)$; we report this quantity honestly in this section rather than assume it away.
\end{remark}

\textbf{Setup.} We generated a reference trajectory from the SIR model above with $\beta = 0.4$, $\gamma = 0.1$, $S(0)=0.99$, $I(0)=0.01$, $R(0)=0$ (so $N_0=1$), integrated it over $t \in [0,100]$.
We then train two models on the same noisy, subsampled observations of $(S,I)$ at $\Delta t=1$ (holding out the tail of the trajectory for extrapolation testing):

\begin{itemize}
\item \textit{Baseline (B-NODE)}: an unconstrained NODE on the full 3D state $(S,I,R)$, $\dot{x} = f_\theta(x,t)$ with $f_\theta$ a small Multilayer Perceptron (MLP), integrated 
with explicit Euler and trained by backpropagation through the unrolled
steps.
\item \textit{NSFD-NODE}: the reduced $(S,I)$ gain/loss network of Remark~\ref{R1} with the update \eqref{E3}, trained in the same manner (Proposition~\ref{P2}).
\end{itemize}
Both models use MLPs of the same size (two hidden layers, 32 units, $\tanh$ activations, gradient norm clipped to 5), so any difference in behavior is due to the discretization and output parameterization rather than model capacity. 
We use the same architecture-matched comparison logic to isolate the effect of structuring a NODE's vector field in \cite{buzhardt2025relationship}.
Both models converge to a comparably low training loss ($\sim 10^{-4}$), indicating that the differences in the predictions stem from the discretization strategy rather than differences in fitting accuracy.

\textbf{Metrics.} We evaluated the two models, which were trained for 3000 epochs, without retraining, at three step sizes: a finer-than-training step size ($\Delta t=0.5$), the training step size ($\Delta t=1$), and a \textit{coarse-step stress test} ($\Delta t=5$, five times coarser than the training step). 
This directly targets Theorem~\ref{T1}, whose guarantee holds for every $\Delta t$, not only the one used in training. 
We report: 
$(i)$ the minimum state value and the number of negative entries, over the full $(S,I,R)$ trajectory; 
$(ii)$ the root-mean-square error (RMSE) on $(S,I)$ over the held-out extrapolation window; and 
$(iii)$ the drift $\max_n |N^n - N_0|$ in total population. 

\textbf{Results.} Tables~\ref{Tab1} and \ref{Tab2} report the outcome, and Figure~\ref{Fig1} shows the trajectories at $\Delta t=1$. 
Two things stand out. 
First, $N$-conservation is exact for NSFD-NODE at every step size by construction (drift $\approx 0$ to floating-point precision) by construction, while the unconstrained baseline drifts substantially ($2.9$ at $\Delta t \le 1$, and $18.35$ under the coarse-step test) despite
matching training loss. 
Second, positivity: NSFD-NODE's modeled states $(S,I)$ never go negative, as guaranteed.
The baseline substantially violates positivity  at every step size, and catastrophically so under the coarse-step test (minimum $-3.19\,e^{-00}$, RMSE $3.32$), despite comparably low training loss. Evidently, low training error does not guarantee of physically consistent unrolled dynamics.

\begin{table}[h]
\centering
\setlength{\tabcolsep}{0.3cm}
\caption{Numerical comparison of the unconstrained B-NODE and NSFD-NODE, 
at a finer-than-training step $\Delta t=0.5$, the training step size $\Delta t=1$ and under a coarse-step stress test $\Delta t=5$.}\label{Tab1}
\resizebox{\textwidth}{!}{
\begin{tabular}{cccccc}
\toprule
Step Size & Model & Min.\ State Value & Negative Entries & RMSE & Max. $|N^n - N^0|$ \\
\midrule\midrule
\multirow{2}{*}{$\Delta t=0.5$} & B-NODE & $-2.4504\,e^{-02}$ & $72$ & $0.0171$ & $2.9403$ \\
& NSFD-NODE & $\phantom{-}0.0000$ & $0$ & $0.0146$ & $0.0000$ \\
\midrule
\multirow{2}{*}{$\Delta t=1$} & B-NODE & $-2.4712\,e^{-02}$ & $37$ & $0.0174$ & $2.9409$ \\
& NSFD-NODE& $\phantom{-}0.0000$ & $0$ & $0.0145$ & $0.0000$ \\
\midrule
\multirow{2}{*}{$\Delta t=5$} & B-NODE & $-3.1906\,e^{-00}$ & $19$ & $3.3236$ & $18.3525$ \\
& NSFD-NODE & $\phantom{-}0.0000$ & $0$ & $0.0249$ & $0.0000$ \\
\bottomrule
\end{tabular}}
\end{table}

\begin{table}[h]
\centering
\setlength{\tabcolsep}{0.25cm}
\caption{Minimum and maximum values of the state variables obtained by the B-NODE and the proposed NSFD-NODE for different time step sizes.}\label{Tab2}
\resizebox{\textwidth}{!}{
\begin{tabular}{ccccccccccccc}
\toprule
\multirow{2}{*}{Step Size}
& \multicolumn{2}{c}{$S$ B-NODE}
& \multicolumn{2}{c}{$S$ NSFD-NODE}
& \multicolumn{2}{c}{$I$ B-NODE}
& \multicolumn{2}{c}{$I$ NSFD-NODE}
& \multicolumn{2}{c}{$R$ B-NODE}
& \multicolumn{2}{c}{$R$ NSFD-NODE} \\
\cmidrule(lr){2-3}\cmidrule(lr){4-5}
\cmidrule(lr){6-7}\cmidrule(lr){8-9}
\cmidrule(lr){10-11}\cmidrule(lr){12-13}
& Min. & Max. & Min. & Max.
& Min. & Max. & Min. & Max.
& Min. & Max. & Min. & Max. \\
\midrule\midrule
$\Delta t=0.5$
& 0.0043 & 1.0198 & 0.0003 & 0.9900
& -0.0245 & 0.4048 & 0.0100 & 0.3913
& 0.0000 & 3.9605 & 0.0010 & 0.9868 \\

$\Delta t=1$
& 0.0043 & 1.0442 & 0.0004 & 0.9900
& -0.0247 & 0.4080 & 0.0100 & 0.3993
& 0.0000 & 3.9613 & 0.0007 & 0.9867 \\

$\Delta t=5$
& -3.1906 & 6.6857 & 0.0011 & 0.9900
& -1.3762 & 5.8326 & 0.0100 & 0.4443
& -0.7115 & 11.6093 & 0.0000 & 0.9852 \\
\bottomrule
\end{tabular}}
\end{table}

\begin{figure}[H]
\centering
\includegraphics[width=\textwidth]{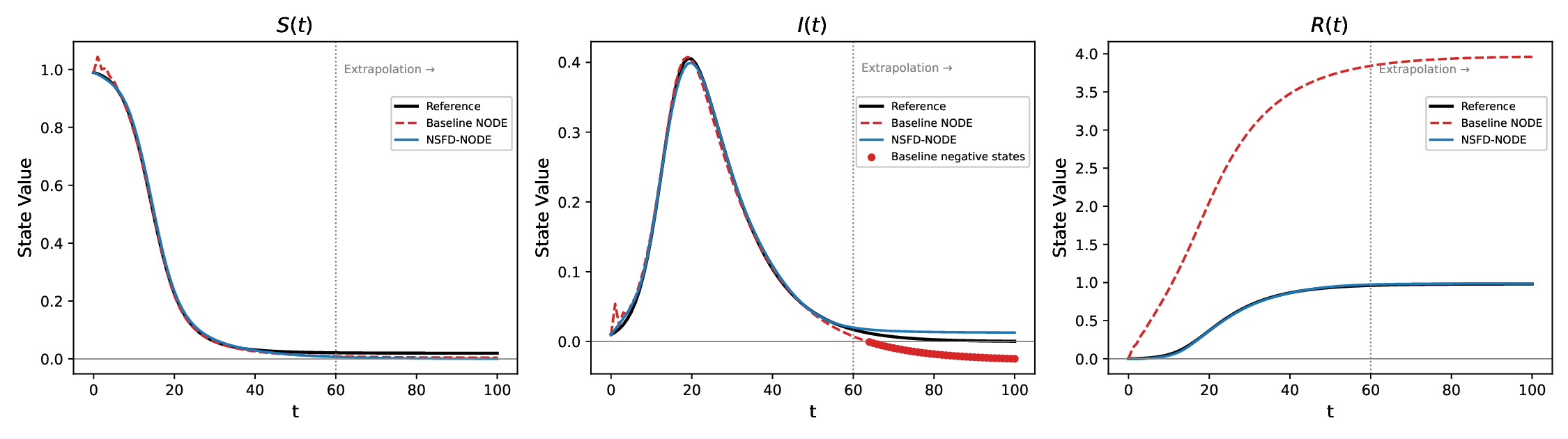}
\caption{Reference, B-NODE (negative excursions marked), and NSFD-NODE trajectories for $S(t)$, $I(t)$, and $R(t)$ at $\Delta t = 1$. 
$R$ is recovered algebraically for NSFD-NODE (Remark~\ref{R1}).
Dotted line: end of training window.}\label{Fig1}
\end{figure}

The main advantage of the scheme is that positivity is a structural consequence of equation \eqref{E3} rather than a property encouraged by a penalty term.
Theorems~\ref{T1} holds for every $\theta$ and every $\Delta t$, with no tuning of a penalty weight and no risk of violating the constraint on out-of-distribution inputs. This is precisely where penalty-based approaches tend to fail. 
However, the cost is architectural; the network must be written in gain/loss form, which is natural for compartmental and production-destruction systems, but restrictive compared to an arbitrary $f_\theta$.

\section{Conclusion}\label{S6}
We introduced an NSFD discretization for neural ODEs based on a gain/loss parameterization. 
This update preserves positivity unconditionally, remains bounded, and is differentiable in closed form. These properties make it directly compatible with standard Neural ODE training pipelines. 
This method addresses a critical limitation of scientific neural ODEs by enforcing physically meaningful nonnegative states via structural discretization instead of a penalty or projection step.

Our work establishes a rigorous connection between structure-preserving numerical analysis and neural ODEs. It demonstrates that qualitative properties can be enforced by design rather than through penalties or projections.



\section*{Declarations}

\subsection*{Conflict of Interest} 
\noindent
The authors declared that they have no conflict of interest.

\subsection*{Data Availability} 
\noindent
No data was used for the research described in the article. 

%




\bibliographystyle{unsrt}
\bibliography{References}


\end{document}